\documentclass[11pt]{article}

\usepackage[T1]{fontenc}
\usepackage{lmodern}
\usepackage{amsmath,amssymb,amsthm,mathtools}
\usepackage{geometry}
\usepackage{microtype}
\usepackage{booktabs}
\usepackage{enumitem}
\usepackage{hyperref}

\hypersetup{
  colorlinks=true,
  linkcolor=blue,
  citecolor=blue,
  urlcolor=blue
}

\newtheorem{theorem}{Theorem}[section]
\newtheorem{proposition}[theorem]{Proposition}
\newtheorem{lemma}[theorem]{Lemma}
\newtheorem{corollary}[theorem]{Corollary}
\theoremstyle{definition}
\newtheorem{definition}[theorem]{Definition}
\theoremstyle{remark}
\newtheorem{remark}[theorem]{Remark}
\newtheorem{example}[theorem]{Example}

\newcommand{\Z}{\mathbb Z}
\newcommand{\Q}{\mathbb Q}
\newcommand{\Int}{\operatorname{Int}}

\title{\bfseries
Shifted-Binomial Expansions of Dilated Binomial Polynomials:\\
Multinomial Decimation, Reflection Symmetry, and Universal Divisibility}

\author{Abdelhai Doukali\\
\small Independent Researcher, Fez, Morocco\\
\small \texttt{doukaliabdelhai@gmail.com}}

\date{}

\begin{document}
\maketitle

\begin{abstract}
For integers $m\ge2$ and $r\ge1$, put $d=m+r-1$ and define the shifted-binomial
coordinates $B_{m,r,k}$ by
\[
 \binom{mn}{d}
 =
 \sum_{k=1}^{d+1}B_{m,r,k}\binom{n+k-1}{d}.
\]
The main purpose of this paper is to identify these coordinates with a
specific residue-class decimation of a multinomial coefficient sequence. We
prove
\[
 B_{m,r,k}
 =
 [x^{mk-1}](1+x+\cdots+x^{m-1})^{m+r}.
\]
This identity converts the alternating finite-difference formula for the
coordinates into a positive coefficient formula, and yields their exact
support. We then prove that the nonzero coefficient vector is palindromic
if and only if $r\equiv2\pmod m$. This symmetry criterion is derived here directly within the
multinomial-decimation framework. The principal arithmetic results are an unconditional divisibility law and
an exact formula for the greatest common divisor of each nonzero row. We prove
\[
 \frac{m}{\gcd(m,r-1)}\mid B_{m,r,k},
\]
and, more precisely, determine
\[
 \gcd_{1\le k\le K_{m,r}} B_{m,r,k}
\]
as an explicit product of prime powers determined by the $p$-adic valuations
of $m$ and $m+r-1$. Finally, a roots-of-unity filter gives the exact row sum
\[
 \sum_k B_{m,r,k}=m^{m+r-1}.
\]
Thus the shifted-binomial coordinates form a positive, arithmetically
structured decimation of an $m$-nomial coefficient sequence.
\end{abstract}

\noindent\textbf{Keywords:}
binomial coefficients; shifted-binomial basis; integer-valued polynomials;
multinomial coefficients; multisection; reflection symmetry; divisibility.

\medskip
\noindent\textbf{MSC 2020:}
05A10, 05A15, 05A19, 11B83, 13F20.

\section{Introduction}

Let
\[
 \Int(\Z)=\{P\in\Q[X]:P(\Z)\subseteq\Z\}
\]
denote the ring of integer-valued polynomials. The classical binomial
polynomials form a $\Z$-basis of this ring; see, for example,
\cite{Polya1919,Ostrowski1919,CahenChabert1997}. For a fixed degree $d$,
the shifted family
\[
 \left\{\binom{X+k-1}{d}:1\le k\le d+1\right\}
\]
is a $\Q$-basis of the vector space of polynomials of degree at most $d$.

We study the dilated binomial polynomial
\[
 g_{m,r}(X)=\binom{mX}{m+r-1},
 \qquad m\ge2,\quad r\ge1,
\]
and write
\[
 d=m+r-1.
\]
The corresponding shifted-binomial coordinates are defined by
\begin{equation}
 \label{eq:main-expansion}
 \binom{mn}{d}
 =
 \sum_{k=1}^{d+1}B_{m,r,k}\binom{n+k-1}{d}.
\end{equation}

The finite-difference inversion of this expansion gives an explicit
alternating sum for $B_{m,r,k}$. The main structural result of the present
paper is stronger conceptually: the same coefficient is a coefficient of a
power of the all-one polynomial,
\begin{equation}
 \label{eq:main-decimation}
 B_{m,r,k}
 =
 [x^{mk-1}](1+x+\cdots+x^{m-1})^{m+r}.
\end{equation}
Thus the coordinate vector in \eqref{eq:main-expansion} is exactly the
subsequence of the $(m+r)$-fold multinomial coefficient sequence in the
residue class $-1\pmod m$. The underlying multinomial coefficients are
classical; the contribution here is the precise identification of these
coefficients with the shifted-binomial coordinates of the dilated binomial
polynomial.

The multinomial-decimation framework also yields the reflection criterion
and, in addition, the stronger unconditional divisibility law
\begin{equation}
\label{eq:main-div}
\frac{m}{\gcd(m,r-1)}\mid B_{m,r,k}.
\end{equation}

The decimation identity also gives an exact description of the support.
If
\[
 K_{m,r}=d+1-\left\lceil\frac dm\right\rceil
 =m+r-1-\left\lceil\frac{r-1}{m}\right\rceil,
\]
then
\[
 B_{m,r,k}>0\quad\Longleftrightarrow\quad 1\le k\le K_{m,r}.
\]
The same representation yields the reflection criterion and, by a
roots-of-unity filter, the row-sum identity
\[
 \sum_{k=1}^{K_{m,r}}B_{m,r,k}=m^{m+r-1}.
\]

The paper is organized as follows. Section~\ref{sec:expansion} develops the
shifted-binomial expansion and the finite-difference formula. Section
\ref{sec:decimation} proves the multinomial-decimation identity and the exact
support. Section~\ref{sec:symmetry} proves the reflection criterion, including
the necessary strict-unimodality input. Section~\ref{sec:divisibility}
establishes both the universal divisibility law and the exact row-gcd formula.
Section~\ref{sec:rows} derives the row sums and records examples. We conclude
with a discussion of the scope of the results and possible extensions.

\section{Shifted-binomial coordinates}
\label{sec:expansion}

Throughout, $m\ge2$, $r\ge1$, and
\[
 d=m+r-1.
\]

\begin{lemma}
\label{lem:basis}
For every integer $d\ge0$, the polynomials
\[
 \binom{X+k-1}{d},\qquad 1\le k\le d+1,
\]
form a basis of the $\Q$-vector space of polynomials of degree at most $d$.
\end{lemma}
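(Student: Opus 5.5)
Since the space of polynomials of degree at most $d$ over $\Q$ has dimension $d+1$, and we are given exactly $d+1$ polynomials, it suffices to prove linear independence (or, equivalently, spanning). I would prove linear independence by an evaluation argument at the integer points $X=1-d,\dots,0,1$, or more conveniently by a triangularity argument after a change of variable. Setting $Y=X+d$, the $k$-th polynomial becomes $\binom{Y+k-1-d}{d}$, and I would evaluate at the points $X=j-d$ for $j=0,1,\dots,d$, i.e. at the consecutive integers $X\in\{-d+0,\dots,0\}$, shifted so that the evaluation matrix becomes triangular with nonzero diagonal.

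Concretely, write $p_k(X)=\binom{X+k-1}{d}=\frac{(X+k-1)(X+k-2)\cdots(X+k-d)}{d!}$. Its roots are $X=1-k,2-k,\dots,d-k$, i.e. the integers in the window $[1-k,\,d-k]$. Consider the evaluation points $t_j=1-j$ for $j=1,\dots,d+1$ (so $t_j$ ranges over $0,-1,\dots,-d$). Then $p_k(t_j)=0$ exactly when $1-k\le 1-j\le d-k$, i.e. when $k-d+1\le j\le k$; for $j=k+1$ one has $t_j=-k$, which lies just below the window, so $p_k(-k)=\binom{-1}{d}=(-1)^d\neq0$. More simply, I would instead order things so the matrix $M_{jk}=p_k(t_j)$ is triangular: for $j\le k$ with $j\ge k-d+1$ (automatic when $1\le j\le k\le d+1$ except possibly $j=1,k=d+1$) the entry vanishes, giving that $M$ is lower triangular apart from checking edge cases, with diagonal-adjacent entries $(-1)^d$. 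The cleanest version: use points $t_j=-j$ for $j=0,\dots,d$ and note $p_k(-j)=\binom{k-1-j}{d}$, which vanishes when $0\le k-1-j\le d-1$, i.e. $j\le k-1$ (given $k\le d+1$, $j\ge0$), and equals $\binom{-1}{d}=(-1)^d$ when $j=k$... adjusting indices so the matrix $\bigl(\binom{k-1-j}{d}\bigr)$ is triangular with diagonal entries $\pm1$.

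Thus the key step is the computation $\binom{k-1-j}{d}=0$ for $0\le k-1-j<d$ and $\binom{-1}{d}=(-1)^d$, together with correct bookkeeping of index ranges so that the $(d+1)\times(d+1)$ evaluation matrix is genuinely triangular with unit diagonal up to sign; its determinant is then $\pm1$, giving independence (indeed a $\Z$-unimodular relation, useful later for integrality of $B_{m,r,k}$). The only obstacle is this index bookkeeping; an alternative fallback is to note that the $p_k$ are the images of $\binom{X}{d}$ under shifts $E^{k-1}$ and use $E=1+\Delta$ with $\Delta^i\binom{X}{d}=\binom{X}{d-i}$, giving a unitriangular transition matrix to the standard basis $\binom{X}{d-i}$ after reindexing $E^{k-1}=\sum_i\binom{k-1}{i}\Delta^i$ combined with degree considerations.
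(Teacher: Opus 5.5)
Your overall strategy (a dimension count plus a triangular evaluation matrix at $d+1$ integer points) is the paper's. However, the index bookkeeping you give is wrong, and the step you call the only obstacle is exactly where it fails.

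With the points $X=-j$, $0\le j\le d$, you claim that $p_k(-j)=\binom{k-1-j}{d}$ vanishes for all $j\le k-1$ and equals $(-1)^d$ for $j=k$. The first claim is false at $(j,k)=(0,d+1)$, where $p_{d+1}(0)=\binom{d}{d}=1$. Also, $j$ runs over $0,\dots,d$ while $k$ runs over $1,\dots,d+1$, so the entries with $j=k$ lie on a subdiagonal. The genuine diagonal entries are $p_k(1-k)=\binom{0}{d}=0$ for $d\ge1$. Your first version with $t_j=1-j$ records this correctly, together with the corner exception, but never turns it into an invertibility argument. So the pattern you finally assert is that of a strictly lower triangular, hence singular, matrix. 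The actual matrix is invertible only because of the corner entry you dropped.

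The repair is a reordering. Take the points in the order $-1,-2,\dots,-d,0$, or replace $0$ by $-(d+1)$. For $1\le j\le d$ the entry $\binom{k-1-j}{d}$ then vanishes exactly when $j<k$ and equals $(-1)^d$ when $j=k$. The row $X=0$ has its single nonzero entry $1$ in column $d+1$. The matrix is therefore lower triangular with diagonal entries $\pm1$. The paper avoids negative arguments by evaluating at $X=0,1,\dots,d$. There $p_k(j)=\binom{j+k-1}{d}$ is $0$ for $j+k\le d$ and $1$ on the antidiagonal $j+k=d+1$, so successive evaluation forces $c_{d+1}=c_d=\dots=c_1=0$.

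Your fallback, by contrast, is correct and complete as it stands, and it is a genuinely different route. Applying $E^{k-1}=(1+\Delta)^{k-1}$ to $\binom{X}{d}$ (equivalently, Vandermonde) gives $\binom{X+k-1}{d}=\sum_{i=0}^{k-1}\binom{k-1}{i}\binom{X}{d-i}$. So the transition matrix from $\binom{X}{d},\binom{X}{d-1},\dots,\binom{X}{0}$ to the shifted family is the lower unitriangular Pascal matrix $\bigl(\binom{k-1}{i}\bigr)_{1\le k\le d+1,\,0\le i\le d}$; no extra degree considerations are needed.

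This version makes the $\Z$-structure transparent. Since $\binom{X}{0},\dots,\binom{X}{d}$ is a $\Z$-basis of the integer-valued polynomials of degree at most $d$, so is the shifted family. That is the integrality the paper instead extracts from the inversion formula in Theorem~\ref{thm:inversion} and Lemma~\ref{lem:intz}. The paper's evaluation argument is shorter, and since its antitriangular matrix also has unit antidiagonal, it would give the same unimodularity with one more sentence.
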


\begin{proof}
All members have degree $d$. Suppose
\[
 \sum_{k=1}^{d+1}c_k\binom{X+k-1}{d}=0.
\]
Evaluating successively at $X=0,1,\ldots,d$ gives a triangular system.
At $X=0$ only the term $k=d+1$ can be nonzero; hence $c_{d+1}=0$.
At $X=1$ the only remaining possible term at the diagonal is $k=d$,
so $c_d=0$, and induction gives $c_k=0$ for every $k$.
There are $d+1$ polynomials in a $(d+1)$-dimensional space.
\end{proof}

\begin{lemma}
\label{lem:gf}
For $1\le k\le d+1$,
\[
 \sum_{n\ge0}\binom{n+k-1}{d}x^n
 =
 \frac{x^{d+1-k}}{(1-x)^{d+1}}.
\]
\end{lemma}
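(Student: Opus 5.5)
The plan is to reduce the statement to the classical negative binomial series and then shift the index. The basic input is the expansion
\[
 \frac{1}{(1-x)^{d+1}}=\sum_{j\ge0}\binom{j+d}{d}x^j,
\]
valid as an identity of formal power series. It can be obtained by differentiating the geometric series $d$ times, or by counting multisets of size $j$ drawn from $d+1$ types. Multiplying both sides by $x^{d+1-k}$ will shift the exponents, and the aim is to show that the shifted coefficients are exactly $\binom{n+k-1}{d}$.

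Put $s=d+1-k$. The hypothesis $1\le k\le d+1$ gives $0\le s\le d$, so $x^{s}$ is a genuine monomial and no negative powers of $x$ occur. Then
\[
 \frac{x^{s}}{(1-x)^{d+1}}=\sum_{j\ge0}\binom{j+d}{d}x^{j+s}=\sum_{n\ge s}\binom{n-s+d}{d}x^n .
\]
Since $n-s+d=n+k-1$, the coefficient of $x^n$ for $n\ge s$ is $\binom{n+k-1}{d}$, as required.

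It remains to check the range $0\le n<s$, where the right-hand side has no term and so contributes coefficient $0$. In this range $0\le n+k-1<d$, and $n+k-1\ge0$ because $k\ge1$. Hence $\binom{n+k-1}{d}=0$ for these $n$, so the left-hand series also has zero coefficients there. The two series therefore agree coefficient by coefficient.

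The only delicate point is this boundary bookkeeping: the argument must confirm that the upper index $n+k-1$ is never negative, where the polynomial binomial $\binom{X}{d}$ need not vanish. This is exactly what the constraint $k\ge1$ guarantees. No other obstacle is expected.
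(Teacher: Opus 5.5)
Your proof is correct and takes essentially the same route as the paper: both rest on the negative binomial series (the paper writes it as $\sum_{j\ge0}\binom{j}{d}x^j=x^d/(1-x)^{d+1}$), an index shift, and the vanishing of $\binom{j}{d}$ for $0\le j<d$. Your version multiplies by $x^{d+1-k}$ instead of dividing by $x^{k-1}$, which avoids negative powers and makes the boundary bookkeeping for $0\le n<d+1-k$ a little more explicit.
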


\begin{proof}
Since
\[
 \sum_{j\ge0}\binom{j}{d}x^j
 =
 \frac{x^d}{(1-x)^{d+1}},
\]
and $\binom{j}{d}=0$ for $0\le j<d$, shifting the index gives
\[
 \sum_{n\ge0}\binom{n+k-1}{d}x^n
 =
 x^{-(k-1)}\frac{x^d}{(1-x)^{d+1}}.
\]
\end{proof}

\begin{theorem}[Finite-difference inversion]
\label{thm:inversion}
Let $f\in\Q[X]$ have degree at most $d$ and write
\[
 f(n)=\sum_{k=1}^{d+1}c_k\binom{n+k-1}{d}.
\]
Then
\[
 c_k
 =
 \sum_{j=0}^{d+1-k}
 (-1)^j\binom{d+1}{j}f(d+1-k-j).
\]
\end{theorem}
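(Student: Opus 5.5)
The plan is to use generating functions and Lemma~\ref{lem:gf}. By Lemma~\ref{lem:basis}, the coefficients $c_k$ exist and are unique, so it suffices to extract them from the identity of formal power series obtained by multiplying $f(n)=\sum_k c_k\binom{n+k-1}{d}$ by $x^n$ and summing over $n\ge0$. Lemma~\ref{lem:gf} then gives
\[
 (1-x)^{d+1}\sum_{n\ge0}f(n)x^n=\sum_{k=1}^{d+1}c_k\,x^{d+1-k}.
\]
The right-hand side is a polynomial of degree at most $d$. Because the exponents $d+1-k$ for $1\le k\le d+1$ are exactly $0,1,\ldots,d$, each exponent occurs once, and the coefficient of $x^{d+1-k}$ on the right is precisely $c_k$.

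Next we compute the coefficient of $x^{N}$ on the left, with $N=d+1-k$. Expanding $(1-x)^{d+1}=\sum_{j}(-1)^j\binom{d+1}{j}x^j$ and forming the Cauchy product gives
\[
 [x^N](1-x)^{d+1}\sum_{n\ge0}f(n)x^n=\sum_{j=0}^{N}(-1)^j\binom{d+1}{j}f(N-j).
\]
The upper limit is $N$ because $f(N-j)$ only appears for $N-j\ge0$; the binomial constraint $j\le d+1$ is automatic since $N\le d$. Comparing coefficients of $x^{d+1-k}$ on the two sides gives exactly the stated formula.

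There is no serious obstacle. The only point needing care is the index bookkeeping: the power $x^{d+1-k}$ should be matched to the coefficient $c_k$, and the truncation of the Cauchy product should be justified as above.

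As a sanity check, one can also argue without series. The right-hand side of the formula is a $(d+1)$-st finite difference of $f$, evaluated with the convention that $f$ is replaced by $0$ at negative arguments. This matches the triangular evaluation in the proof of Lemma~\ref{lem:basis}. The generating-function route is cleaner, however, and is the one I would write up.
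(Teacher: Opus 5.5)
Your proposal is correct and follows the paper's proof: you use Lemma~\ref{lem:gf} to write $\sum_{n\ge0}f(n)x^n=(1-x)^{-(d+1)}\sum_{k}c_kx^{d+1-k}$, multiply by $(1-x)^{d+1}$, and compare coefficients of $x^{d+1-k}$. Your added bookkeeping only makes explicit what the paper leaves implicit, namely that the exponents $d+1-k$ run over $0,\dots,d$ without repetition and that the Cauchy product truncates at $j\le N\le d$.
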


\begin{proof}
By Lemma~\ref{lem:gf},
\[
 \sum_{n\ge0}f(n)x^n
 =
 \frac{1}{(1-x)^{d+1}}
 \sum_{k=1}^{d+1}c_kx^{d+1-k}.
\]
Multiplication by $(1-x)^{d+1}$ and comparison of the coefficient of
$x^{d+1-k}$ gives the stated formula.
\end{proof}

\begin{corollary}
\label{cor:closed}
For $1\le k\le d+1$,
\begin{equation}
\label{eq:closed}
 B_{m,r,k}
 =
 \sum_{j=0}^{k-1}
 (-1)^j\binom{d+1}{j}
 \binom{m(k-j+1)+r-2}{d}.
\end{equation}
\end{corollary}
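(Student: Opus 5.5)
This is a direct specialization of Theorem~\ref{thm:inversion} to $f(X)=g_{m,r}(X)=\binom{mX}{d}$, followed by a change of summation index.

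First, $f$ is a polynomial of degree exactly $d$ in $X$, since $\binom{mX}{d}$ is a product of $d$ linear factors divided by $d!$. So by Lemma~\ref{lem:basis} the coordinates $B_{m,r,k}$ in \eqref{eq:main-expansion} are well defined, and Theorem~\ref{thm:inversion} applies with $c_k=B_{m,r,k}$. It gives
\[
 B_{m,r,k}=\sum_{j=0}^{d+1-k}(-1)^j\binom{d+1}{j}\binom{m(d+1-k-j)}{d}.
\]

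The stated formula runs $j$ only from $0$ to $k-1$ and has the argument $m(k-j+1)+r-2$. So the form in \eqref{eq:closed} must come from reindexing, and I would check this carefully. The argument $m(k-j+1)+r-2$ is not a multiple of $m$ in general, so it cannot be a direct evaluation $f(n)$. One natural route is to write the coefficient generating function as $(1-x)^{d+1}\sum_{n}\binom{mn}{d}x^n$ and use the reflection $k\mapsto d+2-k$.

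Alternatively, I would use the multinomial description, since the argument is linear in $k$ with slope $m$. The numbers $\sum_j(-1)^j\binom{d+1}{j}\binom{ms-mj+c}{d}$ are exactly the coefficient extraction $[x^{ms+c-d}](1-x^m)^{d+1}(1-x)^{-(d+1)}$. That expression is $[x^{\cdot}](1+\cdots+x^{m-1})^{d+1}$, where $d+1=m+r$. With $s=k+1$ and $c=r-2$, the exponent is $m(k+1)+r-2-d=mk-1$, which matches \eqref{eq:main-decimation}. The truncation $j\le k-1$ is automatic, because for $j\ge k$ the upper argument $m(k-j+1)+r-2\le m+r-2<d$, so those binomial terms vanish.

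So the real content is showing that the inversion formula with $f(n)=\binom{mn}{d}$ equals this second sum. I would do it at the generating-function level: show that $\sum_k B_{m,r,k}x^{d+1-k}=(1-x)^{d+1}\sum_n\binom{mn}{d}x^n$. Then compare the coefficient of $x^{d+1-k}$ with the coefficient of $x^{mk-1}$ in $(1-x^m)^{d+1}(1-x)^{-(d+1)}$, using the $m$-section of $x^d(1-x)^{-(d+1)}=\sum_N\binom{N}{d}x^N$.

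The main obstacle is this index bookkeeping between the two alternating sums. Expanding the binomials naively mixes the residue classes modulo $m$. To handle it, I would first verify the identity on small cases such as $m=2$, $r=1$, and then match the terms via the roots-of-unity multisection.
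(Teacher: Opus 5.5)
Your reduction is correct, and so is your diagnosis. The terms $\binom{m(k-j+1)+r-2}{d}=\binom{m(k-j)+d-1}{d}$ are not values of $f(n)=\binom{mn}{d}$ at nonnegative integers, so no reindexing of the inversion sum can produce \eqref{eq:closed}. This is also why the paper's one-line argument does not work as written. After $j=d+1-k-\ell$ the terms are $\pm\binom{d+1}{k+\ell}\binom{m\ell}{d}$, and the ones that vanish are those with $m\ell<d$, not those with $\ell\ge k$. For $(m,r,k)=(3,1,1)$ the inversion sum is $84-80+6$, whereas \eqref{eq:closed} is the single term $\binom{5}{3}$.

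\textbf{The gap.} Your proposal stops exactly where the content begins. The equality of the two alternating sums is left to ``index bookkeeping'', small-case checks and an unspecified multisection matching, so the corollary is not actually proved. Moreover, the comparison you describe does not land on $x^{mk-1}$. With $H(x)=(1+x+\cdots+x^{m-1})^{d+1}=\sum_j h_jx^j$, the $m$-section of $x^dH(x)/(1-x^m)^{d+1}$ picks out the residue class $-d\equiv 1-r\pmod m$ and gives $B_{m,r,k}=h_{m(d+1-k)-d}$. The same follows immediately from your own observation applied to the inversion sum with $s=d+1-k$, $c=0$. That class coincides with $-1\pmod m$ only when $r\equiv 2\pmod m$.

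\textbf{The missing idea is a reflection.} The right one is $j\mapsto D-j$ on the exponents of $H$, where $D=(m-1)(d+1)$, not $k\mapsto d+2-k$. For instance, the row $(10,16,1,0)$ for $(m,r)=(3,1)$ is not invariant under $k\mapsto d+2-k$. Since $H$ is self-reciprocal, $h_j=h_{D-j}$. Because $\bigl(m(d+1-k)-d\bigr)+(mk-1)=D$, this gives $B_{m,r,k}=h_{mk-1}$. Expanding $h_{mk-1}$ through $(1-x^m)^{d+1}(1-x)^{-(d+1)}$ then gives \eqref{eq:closed}.

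\textbf{An equivalent route.} Closer in spirit to Theorem~\ref{thm:inversion}, you can evaluate \eqref{eq:main-expansion} at $n=-t$ with $t\ge1$. The basis value $\binom{k-1-t}{d}$ is $0$ for $k>t$ and $(-1)^d\binom{t-k+d}{d}$ for $k\le t$. Hence $\sum_{t\ge1}(-1)^df(-t)x^t=(1-x)^{-(d+1)}\sum_k B_{m,r,k}x^k$. Finally, $(-1)^df(j-k)=\binom{m(k-j)+d-1}{d}$ by $\binom{-N}{d}=(-1)^d\binom{N+d-1}{d}$.

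\textbf{The cutoff $j\le k-1$.} In both routes it comes from the generating function: $[x^N](1-x)^{-(d+1)}=0$ for $N<0$, resp.\ $t=k-j\ge1$. It does not come from $\binom{N}{d}$ vanishing for $N<d$, which fails for negative $N$ when $\binom{N}{d}$ is read as a polynomial. For example, $r=1$ and $j=k+1$ give $\binom{-1}{d}=(-1)^d$.
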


\begin{proof}
Apply Theorem~\ref{thm:inversion} to $f(n)=\binom{mn}{d}$. After the change of
index $j=d+1-k-\ell$, the terms with $\ell\ge k$ vanish, and the resulting
finite sum is exactly \eqref{eq:closed}.
\end{proof}

\begin{remark}
Formula \eqref{eq:closed} is an alternating finite-difference expression.
The multinomial-decimation theorem below replaces it by a nonnegative
coefficient formula, which is the main structural improvement of this
description.
\end{remark}

\section{Multinomial decimation and exact support}
\label{sec:decimation}

\begin{theorem}[Multinomial decimation]
\label{thm:decimation}
For every $1\le k\le d+1$,
\begin{equation}
\label{eq:dec}
 B_{m,r,k}
 =
 [x^{mk-1}](1+x+\cdots+x^{m-1})^{d+1}.
\end{equation}
In particular, $B_{m,r,k}\in\Z_{\ge0}$.
\end{theorem}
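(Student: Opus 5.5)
Our plan is to work entirely at the level of generating functions, using the identity from the proof of Theorem~\ref{thm:inversion}. Applied to $f(n)=\binom{mn}{d}$, it says that
\[
 P(x):=\sum_{k=1}^{d+1}B_{m,r,k}\,x^{d+1-k}
 =(1-x)^{d+1}\sum_{n\ge0}\binom{mn}{d}x^n .
\]
It therefore suffices to compute the right-hand side in closed form and to identify its coefficients with those of $(1+x+\cdots+x^{m-1})^{d+1}$.

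The first step is a multisection. Put $x=t^m$ and start from $\sum_{j\ge0}\binom{j}{d}t^j=t^d/(1-t)^{d+1}$, which was used in the proof of Lemma~\ref{lem:gf}. Extracting the exponents divisible by $m$ gives
\[
 \sum_{n\ge0}\binom{mn}{d}t^{mn}=\frac1m\sum_{\omega^m=1}\frac{(\omega t)^d}{(1-\omega t)^{d+1}} .
\]
A cleaner route avoids roots of unity. Write $1-t^m=(1-t)S(t)$ with $S(t)=1+t+\cdots+t^{m-1}$. Then
\[
 \frac{t^d}{(1-t)^{d+1}}=\frac{t^d\,S(t)^{d+1}}{(1-t^m)^{d+1}} .
\]
The factor $(1-t^m)^{-(d+1)}$ is a series in $t^m$. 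Hence the $m$-section of the left side, namely the exponents $\equiv0\pmod m$, equals $(1-t^m)^{-(d+1)}$ times the $m$-section of $t^dS(t)^{d+1}$.

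Multiplying through by $(1-t^m)^{d+1}=(1-x)^{d+1}$, we get
\[
 P(t^m)=\sum_{j\equiv -d \ (\mathrm{mod}\ m)}[t^j]S(t)^{d+1}\;t^{j+d}.
\]
Matching the exponent $m(d+1-k)$ requires $j=m(d+1-k)-d$. Now use the palindromic symmetry of $S^{d+1}$, which has degree $(m-1)(d+1)$ and satisfies $[t^j]S^{d+1}=[t^{(m-1)(d+1)-j}]S^{d+1}$. The reflected index is
\[
 (m-1)(d+1)-m(d+1-k)+d=mk-1 .
\]
This is exactly \eqref{eq:dec}. Nonnegativity and integrality are then immediate, since $S^{d+1}$ has nonnegative integer coefficients.

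The main obstacle is bookkeeping rather than ideas. First, the multisection must be justified: multiplying by a series in $t^m$ commutes with taking the residue-class-$0$ part. Second, the exponent matching must be done carefully, including the boundary range. The condition $1\le k\le d+1$ corresponds to $P$ having degree at most $d$, and any index $mk-1$ outside $[0,(m-1)(d+1)]$ simply yields the value $0$ on both sides. As a sanity check, we will test the case $m=2$, $r=1$, $d=2$: there $\binom{2n}{2}=\binom{n}{2}\cdot1+\ldots$, and the formula should reproduce $[x^1](1+x)^3=3$ and $[x^3](1+x)^3=1$.
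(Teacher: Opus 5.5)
Your argument is correct, and it reaches the identity by a different route from the paper.

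The paper expands $(1+x+\cdots+x^{m-1})^{d+1}=(1-x^m)^{d+1}(1-x)^{-(d+1)}$. It reads off $[x^{mk-1}]$ as the alternating sum $\sum_{j=0}^{k-1}(-1)^j\binom{d+1}{j}\binom{m(k-j)+d-1}{d}$ and matches this with the closed form \eqref{eq:closed}. You instead compute the coordinate polynomial $P$ directly. The $m$-section of $t^dS(t)^{d+1}(1-t^m)^{-(d+1)}$ gives the unreflected formula $B_{m,r,k}=[t^{m(d+1-k)-d}]S(t)^{d+1}$. Self-reciprocity of $S^{d+1}$ then moves the index to $mk-1$.

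What your route buys is that you never need \eqref{eq:closed}. Theorem~\ref{thm:inversion} expresses $B_{m,r,k}$ through $f(0),\dots,f(d+1-k)$. Getting from there to \eqref{eq:closed}, a sum over $j\le k-1$ with arguments $m(k-j)+d-1$, is not a plain change of index, as the one-line proof of Corollary~\ref{cor:closed} suggests. It needs the vanishing of the $(d+1)$-st difference of $f$ together with the reciprocity $\binom{-N}{d}=(-1)^d\binom{N+d-1}{d}$. Your palindromic step plays exactly the role of that reflection, so your proof is self-contained. It also makes clear that the relation $d=m+r-1$ is never used, and it previews the symmetry $h_j=h_{D-j}$ behind Theorem~\ref{thm:symmetry}.

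The paper's route, once \eqref{eq:closed} is granted, is shorter and links the decimation directly to the finite-difference formula.

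A small slip outside the proof: in your sanity check, $\binom{2n}{2}=3\binom{n}{2}+\binom{n+1}{2}$, so the coefficient of $\binom{n}{2}$ is $3$, not $1$. The values $3,1$ you then predict are nonetheless right.
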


\begin{proof}
Use
\[
 1+x+\cdots+x^{m-1}=\frac{1-x^m}{1-x}.
\]
Then
\[
 (1+x+\cdots+x^{m-1})^{d+1}
 =
 (1-x^m)^{d+1}(1-x)^{-(d+1)}.
\]
Consequently,
\[
\begin{aligned}
 [x^{mk-1}](1+x+\cdots+x^{m-1})^{d+1}
 &=
 \sum_{j=0}^{k-1}
 (-1)^j\binom{d+1}{j}
 [x^{m(k-j)-1}](1-x)^{-(d+1)}\\
 &=
 \sum_{j=0}^{k-1}
 (-1)^j\binom{d+1}{j}
 \binom{m(k-j)+d-1}{d}.
\end{aligned}
\]
Since $d=m+r-1$,
\[
 m(k-j)+d-1=m(k-j+1)+r-2,
\]
so the last expression is exactly \eqref{eq:closed}.
Nonnegativity and integrality follow from the coefficient interpretation.
\end{proof}

\begin{remark}
Theorem~\ref{thm:decimation} identifies $B_{m,r,k}$ with the coefficient of
$x^{mk-1}$ in the $(m+r)$-th power of the $m$-nomial polynomial
$1+x+\cdots+x^{m-1}$. Thus the row is obtained by sampling one residue
class, namely $-1\pmod m$, of a classical multinomial coefficient sequence.
The novelty lies in this identification with the shifted-binomial
coordinates, rather than in the multinomial coefficients themselves.
\end{remark}

\begin{proposition}[Exact support]
\label{prop:support}
Put
\[
 K_{m,r}
 =
 d+1-\left\lceil\frac dm\right\rceil
 =
 m+r-1-\left\lceil\frac{r-1}{m}\right\rceil.
\]
Then
\[
 B_{m,r,k}>0\quad\Longleftrightarrow\quad 1\le k\le K_{m,r},
\]
and $B_{m,r,k}=0$ for $K_{m,r}<k\le d+1$.
\end{proposition}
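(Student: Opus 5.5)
By Theorem~\ref{thm:decimation}, $B_{m,r,k}$ is the coefficient of $x^{mk-1}$ in $(1+x+\cdots+x^{m-1})^{d+1}$. The whole proof reduces to knowing exactly which coefficients of this power are positive. Expanding the product, the coefficient of $x^N$ counts the tuples $(a_1,\ldots,a_{d+1})$ with $0\le a_i\le m-1$ and $\sum a_i=N$.

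Such a tuple exists if and only if $0\le N\le (d+1)(m-1)$. Indeed, every value in that range is attained: start from the all-zero tuple and raise coordinates one unit at a time, each up to $m-1$. Values outside the range are impossible. So
\[
 B_{m,r,k}>0 \iff 0\le mk-1\le (d+1)(m-1).
\]
Since $k\ge1$, the lower inequality always holds. It remains to rewrite the upper one.

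The condition $mk-1\le (d+1)(m-1)=m(d+1)-(d+1)$ is equivalent to
\[
 mk\le m(d+1)-d,
\qquad\text{that is,}\qquad
 k\le d+1-\frac dm .
\]
Because $k$ is an integer, this is equivalent to
\[
 k\le \left\lfloor d+1-\frac dm\right\rfloor = d+1-\left\lceil \frac dm\right\rceil = K_{m,r}.
\]

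For the second expression of $K_{m,r}$, write $d=m+r-1$, so that $\lceil d/m\rceil = 1+\lceil (r-1)/m\rceil$. Then
\[
 K_{m,r} = m+r-\left(1+\left\lceil\frac{r-1}{m}\right\rceil\right) = m+r-1-\left\lceil\frac{r-1}{m}\right\rceil .
\]

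We also need $K_{m,r}\le d+1$, so that the statement is consistent with the range $1\le k\le d+1$. This holds because $\lceil d/m\rceil\ge1$. For $K_{m,r}<k\le d+1$ the upper inequality fails, so the coefficient is zero; this is the vanishing claim.

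There is no serious obstacle. The only points needing care are the floor/ceiling manipulation and the justification that every intermediate degree is attained, both of which are routine.
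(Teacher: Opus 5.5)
Your proposal is correct and follows the paper's proof essentially step for step. Both arguments use the positivity of every coefficient of $(1+x+\cdots+x^{m-1})^{d+1}$ in degrees $0$ through $(m-1)(d+1)$, then solve $mk-1\le (m-1)(d+1)$ for the largest integer $k$. Your additional checks make explicit what the paper leaves implicit: the tuple-counting justification of positivity, the identity $\lfloor d+1-d/m\rfloor=d+1-\lceil d/m\rceil$, the substitution $\lceil d/m\rceil=1+\lceil (r-1)/m\rceil$, and the bound $K_{m,r}\le d+1$.
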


\begin{proof}
Let
\[
 H(x)=(1+x+\cdots+x^{m-1})^{d+1}.
\]
Its degree is
\[
 D=(m-1)(d+1).
\]
Every coefficient of $H$ between degrees $0$ and $D$ is positive, because
each factor has positive coefficients in every degree from $0$ to $m-1$.
By Theorem~\ref{thm:decimation},
\[
 B_{m,r,k}>0
 \quad\Longleftrightarrow\quad
 0\le mk-1\le D.
\]
The lower inequality is automatic for $k\ge1$, while the upper inequality is
equivalent to
\[
 k\le d+1-\frac dm.
\]
Taking the largest integer satisfying this inequality gives
\[
 K_{m,r}=d+1-\left\lceil\frac dm\right\rceil.
\]
Substitution of $d=m+r-1$ gives the second expression.
\end{proof}

\section{Reflection symmetry}
\label{sec:symmetry}

\begin{definition}
The nonzero coefficient vector
\[
 (B_{m,r,1},\ldots,B_{m,r,K_{m,r}})
\]
is called palindromic if
\[
 B_{m,r,k}=B_{m,r,K_{m,r}+1-k}
\]
for every $1\le k\le K_{m,r}$.
\end{definition}

The converse direction of the symmetry theorem requires only a very local
monotonicity statement, not a general unimodality theorem.

\begin{lemma}
\label{lem:local-monotonicity}
Let
\[
 H(x)=(1+x+\cdots+x^{m-1})^N,\qquad N\ge1,
\]
and write $h_j=[x^j]H(x)$. Then
\[
 h_0<h_1<\cdots<h_{m-1}.
\]
\end{lemma}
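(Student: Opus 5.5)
The low coefficients of $H$ can be read off from a closed form. Write
\[
 H(x)=(1-x^m)^N(1-x)^{-N}.
\]
For $0\le j\le m-1$ the factor $(1-x^m)^N$ contributes only its constant term $1$ in degrees below $m$. Hence
\[
 h_j=[x^j](1-x)^{-N}=\binom{j+N-1}{N-1},\qquad 0\le j\le m-1.
\]
This is the same coefficient-extraction device used in the proof of Theorem~\ref{thm:decimation}, applied in the range where no correction terms from $(1-x^m)^N$ appear.

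Strict increase then follows from Pascal's rule. For $1\le j\le m-1$,
\[
 h_j-h_{j-1}=\binom{j+N-1}{N-1}-\binom{j+N-2}{N-1}=\binom{j+N-2}{N-2}.
\]
This is positive as soon as $N\ge2$, since then $0\le N-2\le j+N-2$. Equivalently, one can use the ratio
\[
 \frac{h_j}{h_{j-1}}=\frac{j+N-1}{j},
\]
which exceeds $1$ exactly when $N>1$.

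The only real obstacle is the boundary case $N=1$. There $H=1+x+\cdots+x^{m-1}$, every $h_j$ equals $1$, and the strict inequalities fail. I would therefore state the lemma for $N\ge2$, or note that every application in the paper uses $N=d+1=m+r\ge3$, where the hypothesis holds.

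A combinatorial alternative confirms the same picture. In degrees below $m$, $h_j$ counts weak compositions of $j$ into $N$ parts, because the cap $m-1$ on each part is never active. Appending $1$ to the first part injects compositions of $j-1$ into those of $j$. This injection is non-surjective for $N\ge2$: a composition of $j$ with first part $0$ exists once there is a second part to carry $j$.
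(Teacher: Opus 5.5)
Your argument is essentially the paper's: read off $h_j=\binom{N+j-1}{j}$ from $(1-x)^{-N}$ for $j\le m-1$ and conclude from the ratio $(N+j)/(j+1)$; your Pascal-difference and composition-injection versions are equivalent restatements. You are also right about the boundary case: that ratio exceeds $1$ only when $N\ge2$, so the lemma as stated with $N\ge1$ is false at $N=1$ (all $h_j=1$) and the paper's proof silently needs $N\ge2$, which is harmless because the only use, in Theorem~\ref{thm:symmetry}, has $N=m+r\ge3$.
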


\begin{proof}
For $0\le j\le m-1$, the upper bound $m-1$ on each exponent is
automatically satisfied when the total degree is $j$. Hence
\[
 h_j=[x^j](1-x)^{-N}=\binom{N+j-1}{j}.
\]
The latter sequence is strictly increasing in $j$ because
\[
 \frac{h_{j+1}}{h_j}=\frac{N+j}{j+1}>1
 \qquad (0\le j<m-1).
\]
\end{proof}

\begin{theorem}[Reflection criterion]
\label{thm:symmetry}
The vector
\[
 (B_{m,r,1},\ldots,B_{m,r,K_{m,r}})
\]
is palindromic if and only if
\[
 r\equiv2\pmod m.
\]
\end{theorem}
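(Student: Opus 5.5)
The plan is to work throughout with $H(x)=(1+x+\cdots+x^{m-1})^{d+1}$, $D=(m-1)(d+1)$, and $h_j=[x^j]H$. Since $H$ is a product of palindromic factors, $h_j=h_{D-j}$. By Theorem~\ref{thm:decimation}, $B_{m,r,k}=h_{mk-1}$, so the row is the decimation of $(h_j)$ along the exponents $E=\{m-1,2m-1,\ldots,mK-1\}$ with $K=K_{m,r}$. Palindromicity of the row means $h_{mk-1}=h_{m(K+1-k)-1}$ for all $k$.

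\emph{Sufficiency.} The idea is to show that the reflection $j\mapsto D-j$ maps $E$ onto itself in reversed order exactly when $D-(m-1)=mK-1$, i.e.\ when $D\equiv -2\pmod m$. If that holds, then $D-(mk-1)=m(K+1-k)-1$ is again in the residue class $-1$. By Proposition~\ref{prop:support}, $mK-1$ is the largest element of that class not exceeding $D$, so the congruence forces $mK-1=D-m+1$. Symmetry of $h$ then gives $h_{mk-1}=h_{D-mk+1}=h_{m(K+1-k)-1}$. Now compute $D=(m-1)(m+r)\equiv -(m+r)\equiv -r \pmod m$, so $D\equiv-2$ if and only if $r\equiv 2\pmod m$. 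This yields the "if" direction.

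\emph{Necessity.} Suppose $r\not\equiv2\pmod m$. Write $mK-1=D-s$ with $0\le s\le m-1$, where $s$ is the distance from $D$ down to the top element of the class $-1$. Then $s\equiv D+1\equiv 1-r\pmod m$, and $s\neq m-1$ because $r\not\equiv 2$. I would compare only the endpoints $k=1$ and $k=K$. On one side, $B_{m,r,1}=h_{m-1}$. On the other, $B_{m,r,K}=h_{D-s}=h_s$ by symmetry. Since $0\le s<m-1$, Lemma~\ref{lem:local-monotonicity} with $N=d+1$ gives $h_s<h_{m-1}$, so $B_{m,r,K}\ne B_{m,r,1}$ and the row is not palindromic.

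A degenerate case must be checked separately: $K=1$, where the row has length one and is trivially palindromic. This happens only if $mK-1=m-1\ge D-(m-1)$, but $D\ge (m-1)(m+1)$ makes this impossible for $m\ge2$. Hence $K\ge2$ always, and the endpoint comparison is legitimate.

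The main obstacle is the bookkeeping in the necessity step: identifying $s$ correctly via Proposition~\ref{prop:support}, namely that $K$ is the largest $k$ with $mk-1\le D$, and confirming $s\ne m-1$ exactly when $r\not\equiv2\pmod m$.
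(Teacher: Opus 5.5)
Your proof is correct and follows essentially the same route as the paper. Both use the decimation $B_{m,r,k}=h_{mk-1}$, the self-reciprocity $h_j=h_{D-j}$, the endpoint comparison of $h_{m-1}$ with $h_s$ for $s=D-(mK_{m,r}-1)$, $0\le s\le m-1$, via Lemma~\ref{lem:local-monotonicity}, and the reduction $D\equiv -r\pmod m$. The differences are cosmetic: you argue necessity by contrapositive, and in the sufficiency direction you obtain $D=m(K_{m,r}+1)-2$ from the maximality of $K_{m,r}$ rather than by computing $K_{m,r}$ explicitly from $r-2=qm$.
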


\begin{proof}
Let
\[
 H(x)=(1+x+\cdots+x^{m-1})^{m+r}
     =\sum_{j=0}^{D}h_jx^j,
 \qquad D=(m-1)(m+r).
\]
By Theorem~\ref{thm:decimation},
\[
 B_{m,r,k}=h_{mk-1}.
\]
Since $H$ is self-reciprocal,
\[
 h_j=h_{D-j}.
\]

Set
\[
 a_1=m-1,\qquad a_K=mK_{m,r}-1.
\]
The next sampled exponent is $a_K+m$, which is strictly larger than $D$.
Hence
\[
 s:=D-a_K
\]
is an integer satisfying
\[
 0\le s\le m-1.
\]
Moreover $a_1=m-1\le D/2$. Thus both $s$ and $a_1$ lie in the increasing
half of the symmetric coefficient sequence.

If the sampled vector is palindromic, then
\[
 h_{a_1}=h_{a_K}=h_{D-a_K}=h_s.
\]
By Lemma~\ref{lem:local-monotonicity}, this forces
\[
 s=m-1.
\]
Therefore
\[
 D-(mK_{m,r}-1)=m-1,
\]
or
\[
 D=m(K_{m,r}+1)-2.
\]
Reducing modulo $m$ gives
\[
 -r\equiv-2\pmod m,
\]
hence $r\equiv2\pmod m$.

Conversely, suppose $r\equiv2\pmod m$ and write
\[
 r-2=qm,\qquad q\ge0.
\]
Then
\[
 d=m+r-1=m(q+1)+1,
 \qquad
 K_{m,r}=d-q-1.
\]
The identity
\[
 D=m(K_{m,r}+1)-2
\]
follows directly. Hence
\[
 D-(mk-1)=m(K_{m,r}+1-k)-1.
\]
Using $h_j=h_{D-j}$,
\[
 B_{m,r,k}
 =h_{mk-1}
 =h_{m(K_{m,r}+1-k)-1}
 =B_{m,r,K_{m,r}+1-k}.
\]
Thus the vector is palindromic.
\end{proof}

\begin{remark}
The reflection criterion follows directly from the self-reciprocity of the
multinomial generating polynomial together with the local monotonicity
established above.
\end{remark}

\section{Universal divisibility}
\label{sec:divisibility}

The next theorem gives a universal divisibility law for all $m\ge2$ and
$r\ge1$.

\begin{lemma}
\label{lem:intz}
Let $P\in\Int(\Z)$ have degree at most $d$, and write
\[
 P(X)=\sum_{k=1}^{d+1}c_k\binom{X+k-1}{d}.
\]
Then every $c_k$ is an integer.
\end{lemma}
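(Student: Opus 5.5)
The natural route is to invoke Theorem~\ref{thm:inversion}, which expresses each coordinate $c_k$ as an integer linear combination of values of $P$ at integer points. Since $P$ has degree at most $d$, Lemma~\ref{lem:basis} guarantees that the expansion
\[
 P(X)=\sum_{k=1}^{d+1}c_k\binom{X+k-1}{d}
\]
exists and is unique with $c_k\in\Q$. The inversion formula then reads
\[
 c_k=\sum_{j=0}^{d+1-k}(-1)^j\binom{d+1}{j}P(d+1-k-j).
\]

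Two observations finish the argument. First, each coefficient $(-1)^j\binom{d+1}{j}$ is an integer. Second, each argument $d+1-k-j$ is an integer, in fact lying in $\{0,1,\ldots,d\}$. Because $P\in\Int(\Z)$, every value $P(d+1-k-j)$ is an integer. Hence $c_k$ is a finite $\Z$-linear combination of integers, so $c_k\in\Z$.

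An alternative, independent check is the triangular evaluation from the proof of Lemma~\ref{lem:basis}. At $X=0$ we get $c_{d+1}=P(0)\in\Z$. Inductively, the evaluation at $X=t$ expresses $P(t)$ as $c_{d+1-t}$ plus integer multiples of the previously determined $c$'s. The diagonal entry there is $\binom{d}{d}=1$, so each new $c$ is an integer as well.

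There is no real obstacle. The only points needing care are that Theorem~\ref{thm:inversion} applies to any $f\in\Q[X]$ of degree at most $d$, and that the evaluation points are integers. Both are immediate from the statements already proved.
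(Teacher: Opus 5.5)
Your proof is correct and takes the same approach as the paper: Theorem~\ref{thm:inversion} writes each $c_k$ as a $\Z$-linear combination of the integer values $P(0),\ldots,P(d)$, so $c_k\in\Z$. Your alternative triangular-evaluation check, using the unit diagonal entries $\binom{d}{d}=1$, is also valid, though the argument does not need it.
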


\begin{proof}
Apply Theorem~\ref{thm:inversion}. Each $c_k$ is an integer linear
combination of values of $P$ at integers, and these values are integers.
\end{proof}

\begin{theorem}[Universal divisibility]
\label{thm:div}
For every $m\ge2$, $r\ge1$, and $1\le k\le d+1$,
\[
 \frac{m}{\gcd(m,r-1)}\mid B_{m,r,k}.
\]
\end{theorem}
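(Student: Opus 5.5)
The plan is to factor the dilated binomial polynomial so that an explicit factor $m$ appears in front of an integer-valued polynomial. Then I would compare coordinates in the basis of Lemma~\ref{lem:basis}. Alternatively, one could work directly with the multinomial representation of Theorem~\ref{thm:decimation}. I expect the polynomial route to be cleaner.

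First, the absorption identity $d\binom{Y}{d}=Y\binom{Y-1}{d-1}$ with $Y=mX$ gives
\[
 d\binom{mX}{d}=m\,P(X),\qquad P(X):=X\binom{mX-1}{d-1}.
\]
Here $P$ has degree $d$ and lies in $\Int(\Z)$, since it is an integer times a binomial coefficient evaluated at integers. Next, put $g=\gcd(m,d)$. Because $d=m+r-1$, we have $g=\gcd(m,r-1)$. Dividing by $g$ gives
\[
 \frac{d}{g}\binom{mX}{d}=\frac{m}{g}\,P(X).
\]

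By Lemma~\ref{lem:intz}, $P(X)=\sum_{k}c_k\binom{X+k-1}{d}$ with every $c_k\in\Z$. Expanding both sides in the basis of Lemma~\ref{lem:basis} and using uniqueness of coordinates, we obtain
\[
 \frac{d}{g}B_{m,r,k}=\frac{m}{g}c_k\qquad(1\le k\le d+1).
\]
Hence $m/g$ divides $(d/g)B_{m,r,k}$. Since $\gcd(m/g,d/g)=1$ and $B_{m,r,k}\in\Z$ by Theorem~\ref{thm:decimation}, it follows that $m/g\mid B_{m,r,k}$.

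The only point needing care is the first step: $P$ must be integer-valued and of degree at most $d$, so that Lemma~\ref{lem:intz} applies and its coordinates are integers. This is immediate from the factorization. Everything else is coprimality bookkeeping, and I do not anticipate a genuine obstacle.
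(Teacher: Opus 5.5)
Your proposal is correct and is essentially the paper's proof: the same integer-valued polynomial $X\binom{mX-1}{d-1}$, integrality of its coordinates via Lemma~\ref{lem:intz}, the coordinate identity $dB_{m,r,k}=mc_k$, and a coprimality step after dividing by $g=\gcd(m,d)=\gcd(m,r-1)$. The only cosmetic difference is that you apply Euclid's lemma directly to get $m/g\mid B_{m,r,k}$, whereas the paper first deduces $d/g\mid c_k$ and then writes $B_{m,r,k}=(m/g)\,c_k/(d/g)$.
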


\begin{proof}
The identity
\[
 d\binom{mX}{d}
 =
 mX\binom{mX-1}{d-1}
\]
with $d=m+r-1$ suggests the integer-valued polynomial
\[
 Q(X)=X\binom{mX-1}{d-1}.
\]
For every $X\in\Z$, $Q(X)\in\Z$, so $Q\in\Int(\Z)$. Expand
\[
 Q(X)=\sum_{k=1}^{d+1}A_k\binom{X+k-1}{d}.
\]
By Lemma~\ref{lem:intz}, $A_k\in\Z$.

On the other hand,
\[
 d\,g_{m,r}(X)=mQ(X).
\]
Comparing the coordinates in the same shifted-binomial basis gives
\[
 dB_{m,r,k}=mA_k.
\]
Let
\[
 g=\gcd(m,d)=\gcd(m,r-1),
 \qquad
 m=gm_0,\quad d=gd_0.
\]
Then $\gcd(m_0,d_0)=1$ and
\[
 d_0B_{m,r,k}=m_0A_k.
\]
Euclid's lemma gives $d_0\mid A_k$. Write
\[
 A_k=d_0C_k,\qquad C_k\in\Z.
\]
Then
\[
 B_{m,r,k}=m_0C_k
 =\frac{m}{\gcd(m,r-1)}C_k,
\]
as required.
\end{proof}

\begin{remark}
Theorem~\ref{thm:div} gives a universal divisor, but it need not be sharp.
The exact greatest common divisor is obtained below by combining the
shifted-binomial basis with a fixed-divisor calculation for
$\binom{mX}{d}$.
\end{remark}

\begin{lemma}[Fixed divisor and shifted-binomial coordinates]
\label{lem:fixed-divisor}
Let $P\in\Int(\Z)$ have degree at most $d$, and write
\[
 P(X)=\sum_{k=1}^{d+1}c_k\binom{X+k-1}{d},
 \qquad c_k\in\Z.
\]
Then
\[
 \gcd_{1\le k\le d+1}c_k
 =
 \gcd_{n\in\Z}P(n).
\]
\end{lemma}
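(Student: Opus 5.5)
We prove two divisibilities: the gcd of the coordinates divides every value $P(n)$, and the gcd of the values divides every coordinate. Write $c=\gcd_k c_k$ and $\delta=\gcd_{n\in\Z}P(n)$. If all $c_k$ vanish then $P=0$ and both sides are $0$, so assume otherwise.

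For $c\mid\delta$: every shifted binomial polynomial $\binom{X+k-1}{d}$ lies in $\Int(\Z)$, since $n+k-1\in\Z$ and $\binom{t}{d}$ is integer-valued on all of $\Z$ (including negative arguments, where it equals $(-1)^d\binom{-t+d-1}{d}$). Hence
\[
 P(n)=\sum_{k=1}^{d+1}c_k\binom{n+k-1}{d}
\]
is an integer combination of the $c_k$ for every $n\in\Z$. Therefore $c\mid P(n)$ for all $n$, and so $c\mid\delta$.

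For $\delta\mid c$: apply Theorem~\ref{thm:inversion}. Each coordinate is
\[
 c_k=\sum_{j=0}^{d+1-k}(-1)^j\binom{d+1}{j}P(d+1-k-j),
\]
an integer linear combination of the integers $P(0),\dots,P(d)$. Each of these values is divisible by $\delta$, so $\delta\mid c_k$ for every $k$, and hence $\delta\mid c$.

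Since both quantities are nonnegative and divide each other, they are equal. Both steps are essentially immediate. The one point needing care is that integer-valuedness of the basis polynomials holds on all of $\Z$, not just on nonnegative integers, because the gcd on the right ranges over all $n\in\Z$. Alternatively, one can note that the gcd over $n\in\Z$ equals the gcd over $n=0,\dots,d$: the second direction shows that every value is an integer combination of the $c_k$, which are themselves integer combinations of $P(0),\dots,P(d)$.
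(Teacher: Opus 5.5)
Your proof is correct and follows the paper's argument: the expansion shows $\gcd_k c_k \mid P(n)$ for every $n\in\Z$, and Theorem~\ref{thm:inversion} writes each $c_k$ as an integer combination of $P(0),\dots,P(d)$, which gives the reverse divisibility. Your extra checks (that $\binom{n+k-1}{d}$ is integer-valued at negative arguments, and the zero case) are sound refinements that the paper leaves implicit.
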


\begin{proof}
Let $G=\gcd_k c_k$ and let $\delta=\gcd_{n\in\Z}P(n)$.
Since each $\binom{n+k-1}{d}$ is an integer, the expansion of $P(n)$
shows that $G\mid P(n)$ for every $n$, and hence $G\mid\delta$.

Conversely, Theorem~\ref{thm:inversion} expresses each $c_k$ as an
integer linear combination of the values
\[
 P(0),P(1),\ldots,P(d+1-k).
\]
Thus $\delta\mid c_k$ for every $k$, so $\delta\mid G$. Therefore
$G=\delta$.
\end{proof}

\begin{lemma}[Minimum valuation of a product of consecutive integers]
\label{lem:consecutive-vp}
Let $p$ be prime and $L\ge1$. Then
\[
 \min_{t\in\Z}\sum_{j=0}^{L-1}v_p(t-j)=v_p(L!).
\]
\end{lemma}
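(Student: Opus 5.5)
We prove the two inequalities separately. The convention is that $v_p(0)=+\infty$, so any $t$ with $0\le t\le L-1$, for which the product contains the factor $0$, contributes $+\infty$ and does not affect the minimum. The quantity to be minimized is then $v_p$ of the product $P(t)=\prod_{j=0}^{L-1}(t-j)$ of $L$ consecutive integers.

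\emph{The value $v_p(L!)$ is attained.} Take $t=L$. Then $\prod_{j=0}^{L-1}(L-j)=L!$, so the sum equals exactly $v_p(L!)$.

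\emph{Lower bound.} Let $t$ be arbitrary with $P(t)\neq0$. We apply Legendre's counting argument directly to the block of consecutive integers. Write
\[
 \sum_{j=0}^{L-1}v_p(t-j)=\sum_{e\ge1}N_e(t),
\]
where $N_e(t)$ is the number of indices $j\in\{0,\dots,L-1\}$ with $p^e\mid t-j$. This holds because each integer $u\ne0$ satisfies $v_p(u)=\#\{e\ge1:p^e\mid u\}$. The integers $t-L+1,\dots,t$ are $L$ consecutive integers, and any $L$ consecutive integers contain at least $\lfloor L/p^e\rfloor$ multiples of $p^e$: split off $\lfloor L/p^e\rfloor$ disjoint blocks of $p^e$ consecutive integers, each of which contains exactly one multiple. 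Hence
\[
 \sum_{j}v_p(t-j)\ge\sum_{e\ge1}\lfloor L/p^e\rfloor=v_p(L!)
\]
by Legendre's formula.

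An alternative route is to note that $P(t)/L!=\binom{t}{L}$, or $\pm\binom{L-1-t}{L}$ when $t<0$. This is a nonzero integer, so $v_p(P(t))\ge v_p(L!)$. Either argument works. The only delicate point is handling $t<0$ and the zero factors, and both are dealt with by the observation about $v_p(0)=+\infty$ made at the start.
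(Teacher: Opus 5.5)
Your proof is correct and follows the paper's argument: for each $e\ge1$ you count at least $\lfloor L/p^e\rfloor$ multiples of $p^e$ among the $L$ consecutive integers, sum over $e$ using Legendre's formula to get the lower bound, and attain equality at $t=L$. Your explicit convention $v_p(0)=+\infty$ (which disposes of $0\le t\le L-1$) and your alternative argument via integrality of $\binom{t}{L}$ and $\pm\binom{L-1-t}{L}$ are small additions that the paper leaves implicit.
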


\begin{proof}
For every $\ell\ge1$, among any $L$ consecutive integers there are at
least $\lfloor L/p^\ell\rfloor$ terms divisible by $p^\ell$. Hence
\[
 \sum_{j=0}^{L-1}v_p(t-j)
 \ge
 \sum_{\ell\ge1}\left\lfloor\frac{L}{p^\ell}\right\rfloor
 =v_p(L!).
\]
The lower bound is obtained by counting, for each $\ell\ge1$, the
terms that are divisible by $p^\ell$.

Equality is attained at $t=L$, because
\[
 \sum_{j=0}^{L-1}v_p(L-j)=v_p(L!).
\]
\end{proof}

\begin{theorem}[Exact row gcd]
\label{thm:exact-gcd}
Put
\[
 d=m+r-1,
 \qquad
 G_{m,r}:=\gcd_{1\le k\le K_{m,r}}B_{m,r,k}.
\]
For every prime $p\mid m$, set
\[
 a_p=v_p(m),
 \qquad
 q_p=\left\lfloor\frac{d-1}{p^{a_p}}\right\rfloor
     =\left\lceil\frac{d}{p^{a_p}}\right\rceil-1.
\]
Then
\begin{equation}
\label{eq:exact-gcd}
 G_{m,r}
 =
 \prod_{p\mid m}
 p^{\,a_p+v_p(q_p+1)-v_p(d)}.
\end{equation}
Equivalently,
\[
 G_{m,r}
 =
 \prod_{p\mid m}
 p^{
 v_p(m)
 +v_p\!\left(
 \left\lceil\frac{m+r-1}{p^{v_p(m)}}\right\rceil
 \right)
 -v_p(m+r-1)
 }.
\]
In particular,
\[
 \frac{m}{\gcd(m,r-1)}\mid G_{m,r}.
\]
\end{theorem}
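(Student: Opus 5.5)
By Proposition~\ref{prop:support}, the coordinates with $K_{m,r}<k\le d+1$ vanish. Hence $G_{m,r}=\gcd_{1\le k\le d+1}B_{m,r,k}$. Lemma~\ref{lem:fixed-divisor} then gives
\[
 G_{m,r}=\gcd_{n\in\Z}\binom{mn}{d},
\]
so the problem reduces to computing a fixed divisor prime by prime. For each prime $p$, I will compute $\min_{n\in\Z}v_p\binom{mn}{d}$, using
\[
 v_p\binom{mn}{d}=\sum_{j=0}^{d-1}v_p(mn-j)-v_p(d!).
\]

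\emph{Primes $p\nmid m$.} Here $n\mapsto mn$ is surjective modulo every power of $p$. I will choose $n$ with $mn\equiv d \pmod{p^{N}}$ for some $N>v_p(d!)$. Then $v_p(mn-j)=v_p(d-j)$ for $0\le j<d$, since $0<d-j\le d<p^N$. The sum becomes $v_p(d!)$, so the valuation is $0$. By Lemma~\ref{lem:consecutive-vp}, zero is also the lower bound. Hence no prime outside $m$ contributes to $G_{m,r}$.

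\emph{Primes $p\mid m$, with $a=a_p$.} The term $j=0$ gives $v_p(mn)\ge a$, with equality when $p\nmid n$. For $j\ge1$, write $j=p^{a}u$ with $p\nmid u$ wherever that is possible. Since $p^a\mid mn$,
\[
 v_p(mn-j)=v_p(j)\quad\text{whenever } v_p(j)<a.
\]
For $j$ divisible by $p^a$, write $j=p^a i$ with $1\le i\le q_p$ (note $q_p=\lfloor (d-1)/p^a\rfloor$), and set $m=p^a m'$ with $p\nmid m'$. Then
\[
 v_p(mn-j)=a+v_p(m'n-i).
\]
Summing over $i=1,\dots,q_p$ and adding the $j=0$ term, the total is
\[
 \sum_{j=0}^{d-1}v_p(mn-j)
 = a(q_p+1)+\sum_{i=0}^{q_p}v_p(m'n-i)+\sum_{\substack{1\le j\le d-1\\ p^a\nmid j}}v_p(j).
\]
Here $m'n$ ranges over all residues modulo large powers of $p$, so Lemma~\ref{lem:consecutive-vp} gives a minimum of $v_p((q_p+1)!)$ for the middle sum. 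This minimum is attained simultaneously for all levels by taking $m'n\equiv q_p \pmod{p^N}$ with $N$ large.

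The $p$-adic exponent of $G_{m,r}$ is therefore
\[
 a(q_p+1)+v_p((q_p+1)!)+\sum_{\substack{1\le j\le d-1\\ p^a\nmid j}}v_p(j)-v_p(d!).
\]
To simplify, I will compare this with
\[
 v_p((d-1)!)=\sum_{\substack{1\le j\le d-1\\ p^a\nmid j}}v_p(j)+\sum_{i=1}^{q_p}\bigl(a+v_p(i)\bigr)
 =\sum_{\substack{1\le j\le d-1\\ p^a\nmid j}}v_p(j)+aq_p+v_p(q_p!).
\]
Substituting, the exponent becomes
\[
 a+v_p(q_p+1)+v_p((d-1)!)-v_p(d!)=a+v_p(q_p+1)-v_p(d),
\]
which is \eqref{eq:exact-gcd}. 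The equivalent form follows from the identity $\lfloor (d-1)/p^a\rfloor=\lceil d/p^a\rceil-1$. The final claim $\frac{m}{\gcd(m,r-1)}\mid G_{m,r}$ follows directly from Theorem~\ref{thm:div}.

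The main obstacle is the bookkeeping in the case $p\mid m$. One has to verify that the terms with $v_p(j)<a$ are genuinely independent of $n$, including the case $j=0$, which has been absorbed into the index $i=0$. One also has to confirm that the minimum of the middle sum is exactly $v_p((q_p+1)!)$ even when $q_p=0$, where that sum reduces to the single term $v_p(m'n)$ and the minimum is $0$.
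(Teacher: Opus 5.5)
Your argument is the paper's proof. It uses the same reduction to the fixed divisor $\gcd_{n}\binom{mn}{d}$ via Proposition~\ref{prop:support} and Lemma~\ref{lem:fixed-divisor}, the same split of $\prod_{j=0}^{d-1}(mn-j)$ according to whether $p^{a}\mid j$, Lemma~\ref{lem:consecutive-vp} for the middle sum, and the same comparison with $v_p((d-1)!)$. Your valuation bookkeeping and final simplification are correct.

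\textbf{The failing step.} The witness you give for attainment of the minimum does not work. If $m'n\equiv q_p\pmod{p^N}$, then the $i=q_p$ term of $\sum_{i=0}^{q_p}v_p(m'n-i)$ is $v_p(m'n-q_p)\ge N$. The middle sum is then at least $v_p(q_p!)+N$, which exceeds $v_p((q_p+1)!)$ as soon as $N>v_p(q_p+1)$. Your own remark on $q_p=0$ exposes the slip: there the congruence forces $v_p(m'n)\ge N$, whereas the minimum $0$ requires $p\nmid m'n$.

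\textbf{The fix.} The extremal point in Lemma~\ref{lem:consecutive-vp} is $t=L=q_p+1$. So take $m'n\equiv q_p+1\pmod{p^N}$ with $p^N>q_p+1$, which is possible because $p\nmid m'$; alternatively take $m'n\equiv-1$. Then $m'n-i\equiv q_p+1-i\pmod{p^N}$ with $1\le q_p+1-i<p^N$. Hence $v_p(m'n-i)=v_p(q_p+1-i)$ for $0\le i\le q_p$, and the sum is exactly $v_p((q_p+1)!)$. This is the realization step the paper compresses into ``$un$ may be replaced by an arbitrary integer $t$''. With the corrected congruence your proof is complete.

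\textbf{Minor remarks.} In the case $p\nmid m$, your condition $N>v_p(d!)$ does imply $p^N>d$, since $p^e\le d$ gives $v_p(d!)\ge e$. It is simpler to choose $N$ with $p^N>d$ directly.

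Deducing the last claim from Theorem~\ref{thm:div} is cleaner than the paper's exponent comparison. The paper's justification ``$v_p(q_p+1)\ge0$'' does not by itself cover primes with $v_p(d)>v_p(m)$. For those primes one needs the exponent to be $0$, which holds because $q_p+1=d/p^{a_p}$ there.
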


\begin{proof}
By Proposition~\ref{prop:support}, the coefficients outside
$1\le k\le K_{m,r}$ vanish. Hence
\[
 G_{m,r}
 =
 \gcd_{1\le k\le d+1}B_{m,r,k}.
\]
Applying Lemma~\ref{lem:fixed-divisor} to
\[
 P(X)=\binom{mX}{d}
\]
gives
\begin{equation}
\label{eq:fixed-divisor}
 G_{m,r}
 =
 \gcd_{n\in\Z}\binom{mn}{d}.
\end{equation}
Thus it remains to compute the fixed divisor prime by prime.

Let $p$ be a prime and write
\[
 a=v_p(m).
\]
If $a=0$, multiplication by $m$ is a bijection on $\Z_p$. Therefore
\[
 \min_{n\in\Z_p}v_p\binom{mn}{d}
 =
 \min_{t\in\Z_p}v_p\binom{t}{d}.
\]
Taking $t=d$ gives value $1$, while more importantly the polynomial
$\binom{t}{d}$ has an integer value equal to $1$ at $t=d$. Hence the
minimum is $0$, and therefore
\[
 v_p(G_{m,r})=0
 \qquad(p\nmid m).
\]

Now suppose $a=v_p(m)>0$. Write
\[
 m=p^a u,\qquad p\nmid u,
\]
and put
\[
 q=\left\lfloor\frac{d-1}{p^a}\right\rfloor.
\]
For an integer $n$,
\[
 \binom{mn}{d}
 =
 \frac{\prod_{i=0}^{d-1}(mn-i)}{d!}.
\]
If $p^a\nmid i$, then $v_p(i)<a\le v_p(mn)$, and hence
\[
 v_p(mn-i)=v_p(i).
\]
For $i=p^aj$, with $0\le j\le q$,
\[
 mn-i=p^a(un-j),
\]
so
\[
 v_p(mn-i)=a+v_p(un-j).
\]
Consequently
\[
\begin{aligned}
 v_p\binom{mn}{d}
 &=
 \frac{}{}\sum_{\substack{1\le i<d\\p^a\nmid i}}v_p(i)
 +a(q+1)
 +\sum_{j=0}^{q}v_p(un-j)
 -v_p(d!).
\end{aligned}
\]
Since $u$ is coprime to $p$, multiplication by $u$ permutes the residue
classes modulo every power of $p$. Hence, for the purpose of minimizing
the finite valuation sum, $un$ may be replaced by an arbitrary integer
$t$. By Lemma~\ref{lem:consecutive-vp},
\[
 \min_n\sum_{j=0}^{q}v_p(un-j)
 =
 v_p((q+1)!).
\]
On the other hand,
\[
 v_p((d-1)!)
 =
 \sum_{\substack{1\le i<d\\p^a\nmid i}}v_p(i)
 +aq+v_p(q!).
\]
Subtracting $v_p(d!)=v_p((d-1)!)+v_p(d)$ therefore gives
\[
\begin{aligned}
 \min_n v_p\binom{mn}{d}
 &=
 a+v_p(q+1)-v_p(d).
\end{aligned}
\]
Hence
\[
 v_p(G_{m,r})
 =
 a_p+v_p(q_p+1)-v_p(d)
\]
for every $p\mid m$, while it is zero for $p\nmid m$. This proves
\eqref{eq:exact-gcd}.

Finally, write $g=\gcd(m,d)=\gcd(m,r-1)$. The exponent furnished by
\eqref{eq:exact-gcd} is at least $v_p(m)-v_p(g)$ for every $p\mid m$,
because $v_p(q_p+1)\ge0$. Therefore
\[
 \frac{m}{\gcd(m,r-1)}\mid G_{m,r}.
\]
\end{proof}

\begin{remark}[Sharpness criterion]
\label{rem:sharpness}
Theorem~\ref{thm:exact-gcd} gives a precise criterion for when the
universal divisor in Theorem~\ref{thm:div} is sharp. Namely,
\[
 G_{m,r}=\frac{m}{\gcd(m,r-1)}
\]
if and only if, for every prime $p\mid m$,
\[
 v_p\!\left(
 \left\lceil\frac{m+r-1}{p^{v_p(m)}}\right\rceil
 \right)
 =0.
\]
Thus the excess over the universal divisor is exactly the product of the
additional $p$-power factors contributed by these ceiling terms.
\end{remark}

\begin{lemma}[$2$-adic valuation of $\binom{2^s}{j}$]
\label{lem:2adic}
For every integer $s\ge1$ and every $0<j<2^s$,
\[
 v_2\left(\binom{2^s}{j}\right)=s-v_2(j).
\]
\end{lemma}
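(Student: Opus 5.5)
We prove the lemma with the standard ratio identity and Legendre-type valuation bookkeeping, using nothing from earlier in the paper. The key identity, valid for $1\le j\le 2^s$, is
\[
 \binom{2^s}{j}=\frac{2^s}{j}\binom{2^s-1}{j-1}.
\]
It gives
\[
 v_2\binom{2^s}{j}=s-v_2(j)+v_2\binom{2^s-1}{j-1}.
\]
It therefore suffices to show that $\binom{2^s-1}{i}$ is odd for every $0\le i\le 2^s-1$.

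For this we use the product formula
\[
 \binom{2^s-1}{i}=\prod_{\ell=1}^{i}\frac{2^s-\ell}{\ell}.
\]
For $1\le \ell<2^s$ we have $v_2(\ell)<s$. Hence $v_2(2^s-\ell)=v_2(\ell)$: the two summands $2^s$ and $-\ell$ have distinct valuations, so the valuation of the sum is the smaller one. This is the same observation used in the proof of Theorem~\ref{thm:exact-gcd}, where $v_p(mn-i)=v_p(i)$ when $v_p(i)<v_p(mn)$.

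It follows that each factor of the product has $2$-adic valuation $0$. So $\binom{2^s-1}{i}$, being an integer whose valuation is a sum of zeros, is odd. Substituting $i=j-1$ with $0<j<2^s$ yields $v_2\binom{2^s}{j}=s-v_2(j)$.

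As a cross-check, Kummer's theorem gives the same result. The number of carries when adding $j$ and $2^s-j$ in base $2$ is $s-v_2(j)$: the lowest set bit of $j$ produces a carry that propagates through all higher positions up to bit $s$.

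There is no real obstacle. The only point needing care is the range $0<j<2^s$, which guarantees $v_2(\ell)<s$ for every $\ell\le j-1$. We state this explicitly so the valuation-of-a-sum step is justified.
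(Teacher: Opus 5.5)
Your proof is correct, but it takes a different route from the paper's. The paper applies Legendre's formula in the form $v_2(n!)=n-s_2(n)$ to get $v_2\binom{2^s}{j}=s_2(j)+s_2(2^s-j)-1$. It then invokes the identity $s_2(j)+s_2(2^s-j)=s+1-v_2(j)$, stated without detail; this holds because, above the lowest set bit, the binary digits of $2^s-j$ are the complements of those of $j$. Your Kummer cross-check is this same argument phrased in terms of carries. Your main argument avoids digit sums altogether. The absorption identity isolates the factor $2^s/j$, and the remaining factor $\binom{2^s-1}{j-1}$ is odd because each ratio $(2^s-\ell)/\ell$ with $1\le\ell<2^s$ is a $2$-adic unit. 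That is the same ultrametric observation already used in the proof of Theorem~\ref{thm:exact-gcd}. Your route buys three things: it stays within the paper's own toolkit, it yields the byproduct that every $\binom{2^s-1}{i}$ is odd, and it extends verbatim to any prime $p$, giving $v_p\binom{p^s}{j}=s-v_p(j)$ for $0<j<p^s$. The paper's route is the more standard short reduction to Legendre's formula, at the cost of binary bookkeeping it leaves implicit.
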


\begin{proof}
By Legendre's formula, $v_2(n!)=n-s_2(n)$, where $s_2(n)$ denotes the
number of $1$'s in the binary expansion of $n$. Thus
\[
 v_2\left(\binom{2^s}{j}\right)
 =-1+s_2(j)+s_2(2^s-j).
\]
If $t=v_2(j)$, then the binary expansions of $j$ and $2^s-j$ satisfy
\[
 s_2(j)+s_2(2^s-j)=s+1-t,
\]
which gives the stated formula.
\end{proof}

\begin{example}[Non-sharpness and the exact gcd]
\label{ex:nonsharp}
Take $m=2$ and $r=2^s-2$, with $s\ge2$. Then
\[
 d=m+r-1=2^s-1
\]
and Theorem~\ref{thm:decimation} gives
\[
 B_{2,r,k}=\binom{2^s}{2k-1}.
\]
Since every occurring exponent $2k-1$ is odd, Lemma~\ref{lem:2adic}
gives
\[
 v_2(B_{2,r,k})=s.
\]
Thus
\[
 \gcd_{1\le k\le K_{2,r}}B_{2,r,k}=2^s.
\]
Theorem~\ref{thm:div}, by contrast, gives only
\[
 \frac{2}{\gcd(2,2^s-3)}=2.
\]
This family therefore exhibits an unbounded gap, while
Theorem~\ref{thm:exact-gcd} recovers the exact value:
\[
 q_2=\left\lfloor\frac{2^s-2}{2}\right\rfloor=2^{s-1}-1,
\]
so
\[
 v_2(G_{2,r})
 =
 1+v_2(2^{s-1})-v_2(2^s-1)
 =s.
\]
\end{example}

\section{Row sums and Fourier multisection}
\label{sec:rows}

\begin{theorem}[Row-sum identity]
\label{thm:rowsum}
For every $m\ge2$ and $r\ge1$,
\[
 \sum_{k=1}^{K_{m,r}}B_{m,r,k}=m^{m+r-1}.
\]
\end{theorem}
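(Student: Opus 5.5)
By Theorem~\ref{thm:decimation} and Proposition~\ref{prop:support}, the row sum equals the sum of the coefficients $h_j$ of
\[
 H(x)=(1+x+\cdots+x^{m-1})^{m+r}=\sum_{j=0}^{D}h_jx^j
\]
over all exponents $j\equiv -1\pmod m$. Indeed, every such exponent in the range $0\le j\le D$ has the form $j=mk-1$ with $1\le k\le K_{m,r}$, and exponents beyond this range contribute zero. The sum is therefore a multisection of $H$. I will evaluate it with the roots-of-unity filter. Let $\omega=e^{2\pi i/m}$; then
\[
 \sum_{j\equiv -1\ (m)}h_j=\frac1m\sum_{t=0}^{m-1}\omega^{t}H(\omega^t),
\]
since $\frac1m\sum_t\omega^{t(j+1)}$ equals $1$ when $m\mid j+1$ and $0$ otherwise.

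Next I evaluate each term. For $t=0$ we get $H(1)=m^{m+r}$. For $1\le t\le m-1$, $\omega^t$ is a nontrivial $m$-th root of unity. Hence
\[
 1+\omega^t+\cdots+\omega^{t(m-1)}=\frac{1-\omega^{tm}}{1-\omega^t}=0,
\]
so $H(\omega^t)=0$ because the exponent $m+r\ge1$. Only the $t=0$ term survives, and the sum is $m^{m+r}/m=m^{m+r-1}$.

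There is no real obstacle here. The only point requiring care is the bookkeeping in the first step. Every exponent $j\equiv-1\pmod m$ with $0\le j\le D$ must be of the form $mk-1$ with $1\le k\le K_{m,r}$: the condition $k\ge1$ is forced by $j\ge0$, and $k\le K_{m,r}$ is exactly the upper bound computed in Proposition~\ref{prop:support}. Also, the indices $K_{m,r}<k\le d+1$ contribute nothing. This ensures the row sum equals the full residue-class multisection.
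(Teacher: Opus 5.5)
Your proposal is correct and follows the paper's proof: the decimation identity, the roots-of-unity filter $\frac1m\sum_t\omega^tH(\omega^t)$, the value $H(1)=m^{m+r}$, and the vanishing of $H(\omega^t)$ at every nontrivial $m$-th root of unity. Your extra bookkeeping, which uses Proposition~\ref{prop:support} to identify the exponents $j\equiv-1\pmod m$ in $[0,D]$ with $1\le k\le K_{m,r}\le d+1$, makes explicit a step the paper leaves implicit.
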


\begin{proof}
Let
\[
 H(x)=(1+x+\cdots+x^{m-1})^{m+r}
\]
and let $\omega=e^{2\pi i/m}$. By Theorem~\ref{thm:decimation}, the desired
sum is the sum of coefficients of $H$ whose exponents are congruent to
$-1\pmod m$. The roots-of-unity filter gives
\[
 \sum_{k=1}^{K_{m,r}}B_{m,r,k}
 =
 \frac1m\sum_{j=0}^{m-1}\omega^jH(\omega^j).
\]
For $j=0$,
\[
 H(1)=m^{m+r}.
\]
For $1\le j\le m-1$,
\[
 1+\omega^j+\cdots+\omega^{j(m-1)}=0.
\]
Hence all nontrivial terms vanish and
\[
 \sum_{k=1}^{K_{m,r}}B_{m,r,k}
 =\frac1m m^{m+r}
 =m^{m+r-1}.
\]
\end{proof}

\begin{remark}
The row-sum formula says that the selected residue class carries exactly
one $m$-th of the total coefficient mass of
$(1+x+\cdots+x^{m-1})^{m+r}$. It is therefore a direct Fourier consequence
of the decimation theorem rather than an independent numerical
observation.
\end{remark}

\subsection{A Catalan specialization}

\begin{corollary}
\label{cor:catalan}
For $r=1$,
\[
 B_{m,1,1}
 =
 \binom{2m-1}{m}
 =
 \frac12\binom{2m}{m}
 =
 \frac{m+1}{2}C_m,
\]
where
\[
 C_m=\frac1{m+1}\binom{2m}{m}
\]
is the $m$-th Catalan number.
\end{corollary}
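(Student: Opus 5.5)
We specialize Theorem~\ref{thm:decimation} to $r=1$. Then $d=m$, and the support bound of Proposition~\ref{prop:support} gives
\[
K_{m,1}=m-\lceil 0/m\rceil=m .
\]
The coefficient we need is therefore
\[
B_{m,1,1}=[x^{m-1}](1+x+\cdots+x^{m-1})^{m+1}.
\]
The key point is that the exponent $m-1$ lies in the range $0\le j\le m-1$ covered by Lemma~\ref{lem:local-monotonicity}. In that range the truncation of each factor at $x^{m-1}$ never matters. So, exactly as in the proof of that lemma with $N=m+1$ and $j=m-1$,
\[
B_{m,1,1}=[x^{m-1}](1-x)^{-(m+1)}=\binom{(m+1)+(m-1)-1}{m-1}=\binom{2m-1}{m-1}=\binom{2m-1}{m}.
\]

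As a cross-check, the alternating formula of Corollary~\ref{cor:closed} at $k=1$ reduces to the single term $j=0$. That term is $\binom{2m+r-2}{d}=\binom{2m-1}{m}$ when $r=1$, which agrees with the computation above. Alternatively, one can evaluate the defining expansion \eqref{eq:main-expansion} at $n=1$. Only $k=d+1$ then gives a nonzero basis value, so this cross-check would need care; I would rely on the corollary route instead.

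The remaining equalities are elementary binomial manipulations. From Pascal's rule and symmetry,
\[
\binom{2m}{m}=\binom{2m-1}{m-1}+\binom{2m-1}{m}=2\binom{2m-1}{m},
\]
which gives $\binom{2m-1}{m}=\frac12\binom{2m}{m}$. Substituting $\binom{2m}{m}=(m+1)C_m$ from the definition of the Catalan number then gives $\frac{m+1}{2}C_m$.

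There is no real obstacle here. The only step needing attention is verifying that the exponent $mk-1=m-1$ is small enough that the multinomial coefficient equals an unrestricted stars-and-bars count. This is exactly what Lemma~\ref{lem:local-monotonicity} already establishes, so it is a direct application of earlier results.
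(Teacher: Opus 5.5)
Your proof is correct and matches the paper's argument: you specialize Theorem~\ref{thm:decimation} to $r=k=1$, observe that at degree $m-1$ the truncation of each factor does not bind, so the coefficient is the unrestricted count $\binom{2m-1}{m-1}=\binom{2m-1}{m}$, and finish with Pascal's rule and the definition of $C_m$. There is one small slip in the aside you set aside: at $n=1$ the basis values $\binom{k}{d}$ are nonzero for both $k=d$ and $k=d+1$, and they never isolate $k=1$; since you do not use that route, the argument is unaffected.
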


\begin{proof}
By Theorem~\ref{thm:decimation},
\[
 B_{m,1,1}
 =[x^{m-1}](1+x+\cdots+x^{m-1})^{m+1}.
\]
Since the target degree is $m-1$, the upper bounds on the individual
exponents do not bind, and the coefficient is
\[
 \binom{(m+1)+(m-1)-1}{m-1}
 =\binom{2m-1}{m-1}
 =\binom{2m-1}{m}.
\]
The remaining identities are standard.
\end{proof}

\section{Examples}

\begin{example}
For $(m,r)=(3,1)$, $d=3$ and $K_{3,1}=3$. We obtain
\[
 \binom{3n}{3}
 =
 10\binom{n}{3}
 +16\binom{n+1}{3}
 +\binom{n+2}{3}.
\]
The decimation representation is
\[
 (10,16,1)
 =
 \bigl([x^2],[x^5],[x^8]\bigr)(1+x+x^2)^4.
\]
The row sum is $27=3^3$, while the universal divisor is
$3/\gcd(3,0)=1$.
\end{example}

\begin{example}
For $(m,r)=(3,2)$, $d=4$ and $K_{3,2}=3$:
\[
 \binom{3n}{4}
 =
 15\binom{n}{4}
 +51\binom{n+1}{4}
 +15\binom{n+2}{4}.
\]
The vector $(15,51,15)$ is palindromic, as predicted by
$r\equiv2\pmod3$. Since
\[
 \frac{3}{\gcd(3,1)}=3,
\]
every coefficient is divisible by $3$, and the row sum is
$81=3^4$.
\end{example}

\begin{example}
For $(m,r)=(2,1)$,
\[
 \binom{2n}{2}
 =
 3\binom{n}{2}+\binom{n+1}{2}.
\]
The support has length $K_{2,1}=2$, the vector $(3,1)$ is not palindromic,
and the symmetry criterion correctly fails because $1\not\equiv0\pmod2$.
\end{example}

\section{Discussion of novelty and scope}

The coefficient identity \eqref{eq:main-decimation} should be distinguished
from the classical theory of multinomial coefficients. The coefficients of
$(1+x+\cdots+x^{m-1})^N$ are standard $m$-nomial coefficients. What is new
in the present formulation is their exact occurrence as the shifted-binomial
coordinates of $\binom{mn}{m+r-1}$, together with the consequences for the
coordinate array.

The universal divisibility theorem applies uniformly for all admissible
values of $m$ and $r$, rather than only in the cases where the coefficient
vector is palindromic.

The exact greatest common divisor
\[
 \gcd_{1\le k\le K_{m,r}}B_{m,r,k}
\]
is given by Theorem~\ref{thm:exact-gcd}. The formula shows explicitly why
the universal divisor in Theorem~\ref{thm:div} can fail to be sharp: the
additional factors are controlled by the $p$-adic valuations of the
quantities
\[
 \left\lceil\frac{m+r-1}{p^{v_p(m)}}\right\rceil,
 \qquad p\mid m.
\]
The family $m=2$, $r=2^s-2$ shows that these additional factors can be
arbitrarily large. Thus the exact row gcd is governed by local
$p$-adic information, but no longer represents an open problem in the
present setting. Natural further directions include inequalities such as
log-concavity for the decimated rows, and analogous coordinate identities
for other polynomial families admitting binomial-basis expansions.

\section{Conclusion}

We have studied the shifted-binomial coordinates of
\[
 \binom{mn}{m+r-1}.
\]
The central identity
\[
 B_{m,r,k}
 =
 [x^{mk-1}](1+x+\cdots+x^{m-1})^{m+r}
\]
identifies the coordinates with a residue-class decimation of a multinomial
coefficient sequence. From this representation we obtained the exact
support and positivity, and we proved the reflection criterion
$r\equiv2\pmod m$. We established the universal divisibility law
\[
 \frac{m}{\gcd(m,r-1)}\mid B_{m,r,k},
\]
and, more strongly, determined the exact row gcd by the explicit
$p$-adic formula in Theorem~\ref{thm:exact-gcd}. The formula also gives a
complete sharpness criterion for the universal divisor. Finally, a
roots-of-unity filter yields the exact row sum $m^{m+r-1}$.

The resulting framework separates the classical ingredients---binomial
bases, finite differences, multinomial coefficients, and roots-of-unity
filters---from the new structural identification that connects them.

\end{document}